\begin{document}

\newcommand{\V}{{\mathcal V}}      
\renewcommand{\O}{{\mathcal O}}
\newcommand{\LL}{\mathcal L}
\newcommand{\mcO}{\mathcal{O}}
\newcommand{\mcF}{\mathcal{F}}
\newcommand{\mcL}{\mathcal{L}}
\newcommand{\mcG}{\mathcal{G}}
\newcommand{\mcH}{\mathcal{H}}
\newcommand{\mcI}{\mathcal{I}}
\newcommand{\mcK}{\mathcal{K}}

\newcommand{\Ext}{\hbox{\rm Ext}}
\newcommand{\Tor}{\hbox{\rm Tor}}
\newcommand{\Hom}{\hbox{Hom}}
\newcommand{\Spec}{\hbox{Spec }}
\newcommand{\Proj}{\hbox{Proj }}
\newcommand{\Mod}{\hbox{Mod}}
\newcommand{\GrMod}{\hbox{GrMod}}
\newcommand{\grmod}{\hbox{gr-mod}}
\newcommand{\Tors}{\hbox{Tors}}
\newcommand{\gr}{\hbox{gr}}
\newcommand{\tors}{\hbox{tors}}
\newcommand{\rank}{\hbox{rank}}
\newcommand{\End}{\hbox{{\rm End}}}
\newcommand{\Der}{\hbox{Der}}
\newcommand{\GKdim}{\hbox{GKdim}}
\newcommand{\gldim}{\hbox{gldim}}
\newcommand{\im}{\hbox{im}}
\renewcommand{\ker}{\hbox{ker }}
\newcommand{\coker}{\hbox{coker }}
\newcommand{\Char}{\hbox{char}}
\newcommand{\colim}{\hbox{colim}}
\newcommand{\depth}{\hbox{depth}}
\def\bee{\begin{eqnarray}}
\def\eee{\end{eqnarray}}

\newcommand{\AGr}{\hbox{A-Gr}}
\newcommand{\cohmod}{\hbox{Coh-Mod}}
\newcommand{\finpres}{\hbox{Fin-Pres}}
\newcommand{\fdim}{\hbox{Fin-Dim}}
\newcommand{\qgr}{\hbox{qgr}}

\newcommand{\lonto}{{\protect \longrightarrow\!\!\!\!\!\!\!\!\longrightarrow}}

\renewcommand{\c}{\cancel}
\newcommand{\fh}{\frak h}
\newcommand{\fp}{\frak p}
\newcommand{\fq}{\frak q}
\newcommand{\fr}{\frak r}
\newcommand{\mf}{\mathfrak}
\newcommand{\m}{{\mu}}
\newcommand{\gl}{{\frak g}{\frak l}}
\newcommand{\ssl}{{\frak s}{\frak l}}
\newcommand{\tw}{{\rm tw}}

\newcommand{\ds}{\displaystyle}
\newcommand{\s}{\sigma}
\renewcommand{\l}{\lambda}
\renewcommand{\a}{\alpha}
\renewcommand{\b}{\beta}
\newcommand{\G}{\Gamma}
\newcommand{\g}{\gamma}
\newcommand{\z}{\zeta}
\newcommand{\e}{\epsilon}
\renewcommand{\d}{\delta}
\newcommand{\p}{\rho}
\renewcommand{\t}{\tau}
\newcommand{\n}{\nu}
\newcommand{\x}{\chi}
\newcommand{\w}{\omega}
\renewcommand{\i}{\iota}

\newcommand{\A}{{\Bbb A}}
\newcommand{\C}{{\Bbb C}}
\newcommand{\N}{{\Bbb N}}
\newcommand{\Z}{{\Bbb Z}}
\newcommand{\ZZ}{{\Bbb Z}}
\newcommand{\Q}{{\Bbb Q}}
\renewcommand{\k}{\mathbb K}

\newcommand{\E}{{\mathcal E}}
\newcommand{\K}{{\mathcal K}}
\renewcommand{\S}{{\mathcal S}}
\newcommand{\T}{{\mathcal T}}

\newcommand{\GL}{{GL}}

\newcommand{\rowxy}{(x\ y)}
\newcommand{\colxy}{ \left({\begin{array}{c} x \\ y \end{array}}\right)}
\newcommand{\scolxy}{\left({\begin{smallmatrix} x \\ y
\end{smallmatrix}}\right)}

\renewcommand{\P}{{\Bbb P}}

\newcommand{\la}{\langle}
\newcommand{\ra}{\rangle}
\newcommand{\tensor}{\otimes}
\newcommand{\tsr}{\tensor}
\newcommand{\ol}{\overline}

\newtheorem{thm}{Theorem}[section]
\newtheorem{lemma}[thm]{Lemma}
\newtheorem{cor}[thm]{Corollary}
\newtheorem{prop}[thm]{Proposition}
\newtheorem{claim}[thm]{Claim}

\theoremstyle{definition}
\newtheorem{defn}[thm]{Definition}
\newtheorem{notn}[thm]{Notation}
\newtheorem{ex}[thm]{Example}
\newtheorem{rmk}[thm]{Remark}
\newtheorem{rmks}[thm]{Remarks}
\newtheorem{note}[thm]{Note}
\newtheorem{example}[thm]{Example}
\newtheorem{problem}[thm]{Problem}
\newtheorem{ques}[thm]{Question}
\newtheorem{conj}[thm]{Conjecture}
\newtheorem{thingy}[thm]{}

\newcommand{\onto}{{\protect \rightarrow\!\!\!\!\!\rightarrow}}
\newcommand{\donto}{\put(0,-2){$|$}\put(-1.3,-12){$\downarrow$}{\put(-1.3,-14.5) 

{$\downarrow$}}}

\newcounter{letter}
\renewcommand{\theletter}{\rom{(}\alph{letter}\rom{)}}

\newenvironment{lcase}{\begin{list}{~~~~\theletter} {\usecounter{letter}
\setlength{\labelwidth4ex}{\leftmargin6ex}}}{\end{list}}

\newcounter{rnum}
\renewcommand{\thernum}{\rom{(}\roman{rnum}\rom{)}}

\newenvironment{lnum}{\begin{list}{~~~~\thernum}{\usecounter{rnum}
\setlength{\labelwidth4ex}{\leftmargin6ex}}}{\end{list}}

\thispagestyle{empty}

\title{Graded coherence of certain extensions of graded algebras}

\keywords{graded coherent algebras, twisted tensor products}

\author[Goetz ]{ }

  \subjclass[2010]{16S38, 16S70, 16W50}
\maketitle

\begin{center}

\vskip-.2in Peter Goetz \\
\bigskip

Department of Mathematics\\ Humboldt State University\\
Arcata, California  95521
\\ \ \\

\end{center}

\setcounter{page}{1}

\thispagestyle{empty}

\vspace{0.2in}

\begin{abstract}

Let $\k$ be a field, and let $A$ and $B$ be connected $\N$-graded $\k$-algebras. The algebra $A$ is said to be a graded right-free extension of $B$ provided there is a surjective graded algebra morphism $\pi: A \to B$ such that $\ker\pi$ is free as a right $A$-module. Suppose that $B$ is graded left coherent, and that $A$ is a graded right-free extension of $B$. We characterize when $A$ is also graded left coherent. We apply our criterion to prove graded coherence of certain non-Noetherian graded twisted tensor products.
\end{abstract}

\bigskip

\section{Introduction}
\label{introduction}

A graded associative algebra is \emph{graded left coherent} if every finitely generated graded left ideal is finitely presented. It is easy to prove that every graded left Noetherian algebra is graded left coherent. In some approaches to the subject of noncommutative projective algebraic geometry, for example Artin-Zhang \cite{Artin-Zhang}, graded Noetherian algebras take a prominent role. However, many naturally occurring graded algebras, while being non-Noetherian, happen to be graded coherent. Moreover, as we now explain, the projective geometry of graded coherent algebras can be studied in a manner similar to the theory developed in Verevkin \cite{Verevkin}, and Artin-Zhang \cite{Artin-Zhang}, for example. 

Let $\k$ be a field. Suppose $A$ is a connected $\N$-graded $\k$-algebra that is finitely generated in degree $1$. Let $\finpres(A)$ denote the category of graded finitely presented left $A$-modules. In general, $\finpres(A)$ is not an abelian category, and so $\finpres(A)$ is not amenable to study via standard homological algebra. However, if $A$ is graded left coherent, then $\finpres(A)$ is an abelian category. Let $\fdim(A)$ denote the full subcategory of $\finpres(A)$ consisting of the graded finite-dimensional left $A$-modules. The quotient category $$\qgr(A) = \finpres(A)/\fdim(A)$$ is also an abelian category. Then, guided by Serre's theorem on the equivalence of categories of coherent sheaves on projective schemes and certain module categories, one considers $\qgr(A)$ as a suitable replacement for the category of coherent sheaves on the nonexistent projective scheme associated to $A$. We refer to the papers of Verevkin \cite{Verevkin}, Artin-Zhang \cite{Artin-Zhang}, and Polishchuk \cite{Polishchuk} for more details and background. 

Given a graded $\k$-algebra $A$, it can be a challenging problem to determine if $A$ is graded coherent. Recently, in joint work with Conner, \cite{Conner-Goetz}, we classified the quadratic twisted tensor products of $\k[x,y]$ and $\k[z]$. Additionally, we characterized the algebras in this class that are graded left Noetherian. It is then a natural problem to determine which of the quadratic twisted tensor products of $\k[x,y]$ and $\k[z]$ are graded left (or right) coherent. Using a theorem of Piontkovskii, see Theorem \ref{Piontkovskii coherence} below, we have checked that many of the non-Noetherian algebras are indeed graded coherent. However, there are algebras in this class where it is not clear if the hypotheses of Theorem \ref{Piontkovskii coherence} hold. Thus we were motivated to extend Piontkovskii's result. 


Now we state and discuss our main results. The following definition is fundamental for us.

\begin{defn}
Let $A$ and $B$ be graded algebras. We say that $A$ is a \emph{graded right-free extension} of $B$ if there is a short exact sequence $$
\xymatrix{
0 \ar[r] & I \ar[r] & A \ar[r]^{\pi} & B \ar[r] & 0, }$$
where $\pi:A \to B$ is a graded algebra morphism, and $I = \ker\pi$ is a free right $A$-module. We also say that $A$ is a  \emph{graded right-free extension of $B$ by the ideal $I$}, in this situation.
\end{defn}

We defer all statements of other definitions to Section 2.


Piontkovskii \cite{Piontkovski} has proven the following useful theorem for checking graded coherence. For example, He-Oystaeyen-Zhang \cite{H-O-Z} have used Theorem \ref{Piontkovskii coherence} to prove graded coherence of certain Ore extensions of 2-Calabi-Yau algebras.

\begin{thm}[\cite{Piontkovski}, Proposition 3.2]
\label{Piontkovskii coherence}
Let $A$ and $B$ be graded algebras. Suppose that $A$ is a graded right-free extension of $B$. If $B$ is graded left Noetherian, then $A$ is graded left coherent. 
\end{thm}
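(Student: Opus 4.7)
The plan is to fix a finitely generated graded left ideal $J \subseteq A$, choose homogeneous generators $f_1,\dots,f_n$, and form the graded surjection $\phi : F \to J$, where $F$ is a finitely generated graded free left $A$-module. Set $K = \ker\phi$. Showing that $K$ is finitely generated as a left $A$-module proves that $J$ is finitely presented; since $J$ is arbitrary, this will give graded left coherence of $A$.

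My strategy is to tensor the short exact sequence $0 \to K \to F \to J \to 0$ on the left with $B$, viewing $B$ as a right $A$-module via $\pi$. Since $F$ is projective, the resulting long exact sequence collapses to
\[
0 \longrightarrow \Tor_1^A(B,J) \longrightarrow K/IK \longrightarrow F/IF \longrightarrow J/IJ \longrightarrow 0.
\]

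The heart of the argument is the vanishing $\Tor_1^A(B,J) = 0$. Because $I$ is free, and hence flat, as a right $A$-module, the sequence $0 \to I \to A \to B \to 0$ is a flat resolution of $B$ over $A$ of length one, so $\Tor_1^A(B,J)$ is precisely the kernel of the multiplication map $\mu : I \otimes_A J \to J$. Flatness of $I$ gives an injection $I \otimes_A J \hookrightarrow I \otimes_A A = I$ induced by $J \hookrightarrow A$; composing with the inclusion $I \hookrightarrow A$ yields an injective map $I \otimes_A J \to A$ that factors through $\mu$ followed by $J \hookrightarrow A$. Injectivity of the composition forces $\mu$ to be injective, so $\Tor_1^A(B,J) = 0$. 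This is the only place where the right-freeness of $I$ enters, and I expect it to be the main obstacle; the rest is a routine finite-generation argument.

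With the vanishing in hand, $K/IK$ embeds in the finitely generated free left $B$-module $F/IF$. Since $B$ is graded left Noetherian, $K/IK$ is a finitely generated graded left $B$-module. Lift a finite set of homogeneous $B$-generators of $K/IK$ to elements $k_1,\dots,k_s \in K$; because $\pi$ is a graded morphism of connected $\N$-graded algebras we have $I \subseteq A_{\geq 1}$, so the graded Nakayama lemma implies that $k_1,\dots,k_s$ generate $K$ as a left $A$-module. Therefore $K$ is finitely generated, $J$ is finitely presented, and $A$ is graded left coherent.
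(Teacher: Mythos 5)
Your proof is correct. Note that the paper does not actually prove this statement---it is quoted from Piontkovski---so the natural comparison is with the paper's proof of its generalization, Theorem \ref{new criterion on coherence}, where the Noetherian hypothesis on $B$ is weakened to coherence. There the author runs the change-of-rings spectral sequence $\Tor_p^B(\k,\Tor_q^A(B,A/J))\Rightarrow \Tor_{p+q}^A(\k,A/J)$ and uses the same key input as you do (right-freeness of $I$ gives the length-one free resolution $0\to I\to A\to B\to 0$, hence $\Tor_q^A(B,-)=0$ for $q\geq 2$) to reduce everything to the two $B$-modules $B\otimes_A(A/J)$ and $\Tor_1^A(B,A/J)\cong (I\cap J)/(IJ)$. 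Your route is more elementary and entirely self-contained: you present $J$ by a finite free module $F$, show $\Tor_1^A(B,J)\cong\Tor_2^A(B,A/J)=0$ from the flat-dimension bound, embed $K/IK$ into the finitely generated free $B$-module $F/IF$, and lift generators via graded Nakayama (which applies because $I\subseteq A_+$ and $K$ is bounded below as a submodule of $F$). The one step where you genuinely use Noetherianity---finite generation of the submodule $K/IK$ of $F/IF$---is precisely the step that fails when $B$ is only coherent, and the paper's spectral-sequence formulation is what isolates the resulting obstruction as finite presentation of $(I\cap J)/(IJ)$. For the Noetherian statement at hand, your argument is complete and arguably cleaner.
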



One of the motivations for this work was to see if one could relax the condition in Theorem \ref{Piontkovskii coherence} that $B$ is graded left Noetherian to the condition: $B$ is graded left coherent. Unfortunately, without further assumptions, this is not the case. Our first main result is the following.

\begin{thm}
\label{counterexample to naive} There are graded algebras $A$ and $B$ such that $A$ is a graded right-free extension of $B$, the algebra $B$ is graded left coherent, and $A$ is not graded left coherent. 
\end{thm}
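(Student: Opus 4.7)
The plan is to construct an explicit counterexample $(A,B)$. For $B$ I would take a graded algebra that is left coherent but not left Noetherian; a natural choice is the free associative algebra $B=\k\langle y_1,y_2\rangle$, whose graded left coherence is a classical fact and which is clearly not left Noetherian.

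For $A$ I would adjoin a single generator $x$ to $B$ with carefully chosen homogeneous relations. Specifically, take $A=\k\langle x,y_1,y_2\rangle/R$, where the relations in $R$ are of the form $y_i x - x r_i$ (with $r_i$ homogeneous of the appropriate degree in $A$), possibly augmented by additional relations lying entirely inside $xA$. Relations of the first form ensure that $xA$ is a two-sided ideal of $A$ and that $A/xA\cong B$. One then verifies, using a Gr\"obner basis or Bergman diamond-lemma argument, that $x$ is a left non-zerodivisor in $A$; this yields that $xA$ is free of rank one as a right $A$-module, establishing that $A$ is a graded right-free extension of $B$. Together with the known coherence of $B$, this takes care of conditions (i) and (ii) of the statement.

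The bulk of the work is the negative condition (iii): showing that $A$ is not graded left coherent. The plan is to exhibit a specific finitely generated graded left ideal $J\subseteq A$ whose first syzygy module is not finitely generated over $A$. A natural approach is to take $J=Af_1+\cdots+Af_n$ chosen so that, when one tries to build a presentation of $J$ by lifting syzygies of the images $\bar f_i$ in $B$, an infinite family of genuinely new $A$-syzygies appears, coming from the left $A$-module structure of the ideal $xA$. Concretely, I would look for an element $f\in A$ whose left annihilator contains an infinite sequence $a_1,a_2,\dots\in A$ such that no $a_n$ lies in the left ideal generated by $\{a_i:i<n\}$, and then identify $J$ so that this bad annihilator shows up as a syzygy submodule of a presentation of $J$.

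The main obstacle is designing the relations $r_i$ (and any supplementary relations) so that the two competing requirements hold simultaneously: $x$ must remain a left non-zerodivisor (so that the right-free property is preserved), and yet the left module $xA$ must carry enough additional structure to force infinitely many minimal syzygies in some finitely generated left ideal. These requirements pull against each other, so the example is likely a rather delicate hand-crafted one; plausibly it is drawn from, or inspired by, one of the twisted tensor products of $\k[x,y]$ and $\k[z]$ classified in \cite{Conner-Goetz}, which motivated this paper and provided the author with a concrete source of non-Noetherian algebras whose coherence is subtle to decide.
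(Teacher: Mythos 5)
Your blueprint is exactly the one the paper follows: take $B$ to be the free algebra on two generators (coherent but non-Noetherian), adjoin one more generator whose right annihilator is zero (so the ideal it generates is right-free of rank one) but whose left annihilator is an infinitely generated left ideal (so the principal left ideal it generates is finitely generated but not finitely presented). However, your proposal never actually produces the algebra: it ends with ``I would look for an element $f$\dots'' and ``plausibly it is drawn from\dots''. Since the theorem is a pure existence statement proved by explicit construction, the construction \emph{is} the proof, and omitting it is a genuine gap rather than a stylistic shortcut.

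Moreover, the difficulty you anticipate --- that keeping $x$ a left non-zerodivisor while forcing infinitely many minimal left syzygies are ``requirements that pull against each other'' and demand a delicate hand-crafted example --- is not real. In a noncommutative ring the left and right annihilators of an element are independent, so one one-sided zero relation does all the work. The paper takes $A=\k\langle x,y,z\rangle/\langle yz-zy,\ xz\rangle$ and $I=\langle z\rangle=zC$: the relation $xz=0$ puts $x$ in the left annihilator of $z$ without touching its right annihilator, and the commutation relation $yz=zy$ propagates this to $xy^iz=xzy^i=0$ for all $i$, producing the infinite, non-finitely-generated family $\{xy^i\}$ of left annihilators. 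A diamond-lemma monomial basis (order $x<z<y$, no ambiguities to resolve) confirms simultaneously that the right annihilator of $z$ is zero, that the left annihilator is exactly the ideal generated by $\{xy^i: i\ge 0\}$ and is not finitely generated, and that $A/zA\cong\k\langle x,y\rangle$. To complete your proof you would need to write down such relations $r_i$ explicitly (in the paper's example, $r$ for $y$ is $y$ itself and $r$ for $x$ is $0$) and carry out the basis computation; as it stands, the existence of a workable choice is asserted, not established.
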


On a positive note we prove, through analysis of a change of rings spectral sequence, the following criterion.

\begin{thm}
\label{new criterion on coherence}
Let $A$ and $B$ be graded algebras such that $B$ is graded left coherent. Suppose that $A$ is a graded right-free extension of $B$ by the ideal $I$. Then:
\begin{itemize}
\item[(1)] if $J$ is a finitely generated graded left ideal of $A$, then $(I \cap J)/(IJ)$ is a finitely generated left $B$-module;
\item[(2)] $A$ is graded left coherent if and only if for every finitely generated graded left ideal $J$ of $A$ the left $B$-module $(I \cap J)/(IJ)$ is finitely presented. 
\end{itemize}
\end{thm}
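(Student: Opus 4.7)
The plan is to exploit the Cartan--Eilenberg change-of-rings spectral sequence
\[
E^2_{p,q} \;=\; \Tor^B_p\bigl(\k,\, \Tor^A_q(B, A/J)\bigr) \;\Longrightarrow\; \Tor^A_{p+q}(\k, A/J),
\]
where $J$ is a finitely generated graded left ideal of $A$. Because the short exact sequence $0 \to I \to A \to B \to 0$ is a projective resolution of $B$ as a right $A$-module, we have $\Tor^A_q(B, -) = 0$ for $q \geq 2$, so only the rows $q = 0$ and $q = 1$ contribute. Using this length-one resolution to compute directly gives $\Tor^A_0(B, A/J) = B/\bar J$ (where $\bar J := \pi(J)$) and $\Tor^A_1(B, A/J) = (I \cap J)/IJ$, with the latter naturally carrying a left $B$-module structure since $I \cdot (I \cap J) \subseteq IJ$. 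Because only two rows are nonzero, the only surviving differential is $d_2\colon E^2_{p,0} \to E^2_{p-2,1}$, and the spectral sequence collapses into the long exact sequence
\begin{multline*}
\cdots \to \Tor^B_{n+1}(\k, B/\bar J) \to \Tor^B_{n-1}(\k, (I \cap J)/IJ) \to \Tor^A_n(\k, A/J) \\
\to \Tor^B_n(\k, B/\bar J) \to \Tor^B_{n-2}(\k, (I \cap J)/IJ) \to \cdots
\end{multline*}

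For part (1), extract the piece at $n = 1$:
\[
\Tor^B_2(\k, B/\bar J) \to \k \otimes_B (I \cap J)/IJ \to \Tor^A_1(\k, A/J) \to \Tor^B_1(\k, B/\bar J) \to 0.
\]
Finite generation of $J$ makes $\Tor^A_1(\k, A/J)$ finite-dimensional, and graded left coherence of $B$ together with $\bar J$ being finitely generated makes $\bar J$ finitely presented, hence $\Tor^B_1(\k, B/\bar J)$ and $\Tor^B_2(\k, B/\bar J)$ finite-dimensional. Squeezing then yields that $\k \otimes_B (I \cap J)/IJ$ is finite-dimensional, so $(I \cap J)/IJ$ is finitely generated over $B$ by graded Nakayama.

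For part (2), the analogous piece at $n = 2$ is
\[
\Tor^B_3(\k, B/\bar J) \to \Tor^B_1(\k, (I \cap J)/IJ) \to \Tor^A_2(\k, A/J) \to \Tor^B_2(\k, B/\bar J).
\]
Graded left coherence of $A$ amounts to the finite-dimensionality of $\Tor^A_2(\k, A/J)$ for every finitely generated graded left ideal $J$, while, in view of part (1), finite presentation of $(I \cap J)/IJ$ over $B$ amounts to the finite-dimensionality of $\Tor^B_1(\k, (I \cap J)/IJ)$. The plan is to observe that since $B$ is coherent and $\bar J$ is finitely presented, iterating the fact that the category of finitely presented graded left $B$-modules is abelian shows that every syzygy of $B/\bar J$ is finitely presented, whence each $\Tor^B_i(\k, B/\bar J)$ is finite-dimensional. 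The outer terms of the displayed sequence are then finite-dimensional, forcing $\Tor^A_2(\k, A/J)$ to be finite-dimensional if and only if $\Tor^B_1(\k, (I \cap J)/IJ)$ is, which is exactly (2).

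The main technical obstacle is setting up the Cartan--Eilenberg spectral sequence in the graded setting with sufficient control on $d_2$ to extract the long exact sequence above; once this is in place, the remainder is a bookkeeping exercise involving graded Nakayama and the propagation of finite presentation through the abelian category of finitely presented modules over a coherent ring.
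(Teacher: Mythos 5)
Your proposal is correct and follows essentially the same route as the paper: the same change-of-rings spectral sequence with $E^2_{p,q}=\Tor^B_p(\k,\Tor^A_q(B,A/J))$, the same identifications $\Tor^A_0(B,A/J)\cong B/\bar J$ and $\Tor^A_1(B,A/J)\cong (I\cap J)/(IJ)$, the vanishing of rows $q\geq 2$ from freeness of $I$, and coherence of $B$ to make every $\Tor^B_i(\k,B/\bar J)$ finite-dimensional. The only differences are presentational: you package the two-row degeneration as the standard long exact sequence and read off both implications of (2) at once (and derive (1) from the $n=1$ segment plus graded Nakayama), whereas the paper proves (1) by a direct kernel computation and treats the two directions of (2) separately via the $E^3$ terms.
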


Although condition (2) of Theorem 1.4 is admittedly technical we prove the following result as an instance where it is automatically satisfied.

\begin{thm}
\label{useful theorem}
Let $A$ and $B$ be graded algebras such that $B$ is graded left coherent. Suppose that $A$ is a graded right-free extension of $B$ by the ideal $I$; identify $B$ with $A/I$. Furthermore, assume that there is a vector space decomposition $B = C + D$, where $C$ is a graded left Noetherian subalgebra of $B$, and $D$ is a graded left ideal of $B$ with a finite homogeneous generating set $\{{\overline z}_i = z_i+I:  1 \leq i \leq m\}$ such that $z_i I = 0$ for all $1 \leq i \leq m$. Then $A$ is graded left coherent.  
\end{thm}

Here is the outline of the paper. In the preliminary Section \ref{preliminaries} we gather together the relevant definitions and basic results on coherence. Section \ref{main results} contains the proofs of our main theorems: Theorem \ref{counterexample to naive}, Theorem \ref{new criterion on coherence}, and Theorem \ref{useful theorem}. In Section \ref{examples} we show how Theorem \ref{useful theorem} can be used to prove graded coherence of certain graded twisted tensor products. We conclude the paper with some questions.

\section{Preliminaries}
\label{preliminaries}

Let $\k$ be a field. A connected $\N$-graded $\k$-algebra is a unital, associative $\k$-algebra, $A = \oplus_{n \geq 0} A_n$ such that $A_0 = \k$ and $A$  is generated by finitely many homogeneous elements. This implies that $A$ is \emph{locally finite}, i.e., that $\dim_{\k} A_n < \infty$ for all $n \geq 0$. In this paper, the term \emph{graded algebra} will refer, exclusively, to a connected $\N$-graded $\k$-algebra.  We will only consider left $A$-modules which are $\Z$-graded, so the term \emph{graded} $A$-\emph{module} will refer to a left $A$-module $M = \oplus_{n \in \Z} M_n$ such that $A_m M_n \subseteq M_{m+n}$. Given $d \in \Z$ and a graded module $M$, we write $M(d)$ for the module $M$ with shifted grading: $M(d)_n = M_{d+n}$.  We will write $A^n$ for the free $A$-module of rank $n$; we mainly work with finitely generated free modules, so usually $n \in \N$. A distinguished role is played by the \emph{trivial $A$-module} which is the $A$-module $\k = A/A_+$, where $A_+ = \oplus_{n > 0} A_n$. Tensor products taken with respect to $\k$ are denoted by $\tsr$.


For a graded algebra $A$, a graded right $A$-module $M$ and a graded left $A$-module $N$, we denote by $\Tor_n^A(M, N)$ the value of the $n$th left derived functor of the functor $M \tsr_A \underline{\hspace {.2cm}}$ on the module $N$. Thus, to determine $\Tor_n^A(M, N)$ one requires a projective resolution of $N$. As is well known, $\Tor_n^A(M, N)$ can also be computed from a projective resolution of $M$. The space $\Tor_n^A(M, N)$ inherits an internal grading coming from the usual grading on the tensor product: $$(M \tsr_A N)_n = \bigoplus_{l+m = n} M_l \tsr_A N_m.$$

\subsection{Graded coherence}

The following are the basic definitions we will need regarding the notion of coherence.

\begin{defn}
Let $A$ be a graded algebra. A graded $A$-module $M$ is \emph{graded coherent} if 
\begin{itemize}
\item[(1)] $M$ is finitely generated;
\item[(2)] the kernel of any graded morphism $A^n \to M$ (not necessarily surjective) is finitely generated.
\end{itemize}
Equivalently, $M$ is \emph{graded coherent} if $M$ is finitely generated and every finitely generated graded submodule of $M$ is finitely presented.
\end{defn}

Clearly, every graded coherent module is finitely presented. 

\begin{defn} Let $A$ be a graded algebra. We say that $A$ is \emph{graded left coherent} if the left regular module, $_A A$, is graded coherent.
\end{defn} 

It is easy to show that a graded algebra $A$ is graded left coherent if and only if every finitely generated graded left ideal of $A$ is finitely presented.

Next we state the basic results we will need about graded coherence. These results are certainly well known to experts, at least in the case of commutative, not necessarily graded, rings. A basic reference is Glaz' definitive text \cite{Glaz}. To help keep the paper self-contained, and for lack of a good reference in the case of connected graded $\k$-algebras we give proofs of a few of the results. The Snake Lemma will be very useful.

\begin{lemma}[\cite{Weibel}, Snake Lemma 1.3.2]
\label{snake}
Let $A$ be a graded algebra. Consider a commutative diagram of graded $A$-modules of the form 
$$\xymatrix{
& &A' \ar[d]^{f} \ar[r]^{q} & B' \ar[d]^{g} \ar[r]^{p} & C' \ar[d]^{h} \ar[r] & 0 \\
& 0 \ar[r] & A \ar[r]^{i} & B \ar[r]^{j} & C. &
}$$
If the rows are exact, then there is an exact sequence: 
$$\xymatrix{
&\ker f \ar[r]^{q} & \ker g \ar[r]^{p} & \ker h \ar[r]^{\d} & \coker f \ar[r]^{\overline{i}} & \coker g\ar[r]^{\overline{j}} & \coker h.
}$$

\end{lemma}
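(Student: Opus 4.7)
The plan is to prove the Snake Lemma by a direct diagram chase, working entirely within the category of graded $A$-modules so that all maps automatically preserve the grading. The only construction that needs real care is the connecting homomorphism $\d:\ker h \to \coker f$; the remaining six exactness checks are then variations of the same chase.

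First I would construct $\d$. Given a homogeneous $c' \in \ker h$, surjectivity of $p$ lets me choose $b' \in B'$ with $p(b') = c'$. Then $j(g(b')) = h(p(b')) = h(c') = 0$, so by exactness of the bottom row $g(b') \in \im i$, and since $i$ is injective there is a unique $a \in A$ with $i(a) = g(b')$; I would set $\d(c') = a + \im f$. For well-definedness, two choices of lift differ by an element of $\ker p = \im q$, say $q(a')$, and commutativity of the left square gives $g(q(a')) = i(f(a'))$, so $a$ changes by $f(a') \in \im f$. All maps involved preserve degree, so $\d$ is a graded $A$-module morphism; extending $\k$-linearly to non-homogeneous elements is harmless.

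Next I would check exactness at each of the six positions. Exactness at $\ker g$ and $\ker h$ reduces to chases of the following type: if $b' \in \ker g$ satisfies $p(b') = 0$, write $b' = q(a')$ and use $g(q(a')) = i(f(a'))$ together with injectivity of $i$ to conclude $f(a') = 0$, so $a' \in \ker f$. For $\ker \d = \im(p|_{\ker g})$, note that $\d(c') = 0$ means the element $a$ produced above lies in $\im f$, say $a = f(a'')$; then $b' - q(a'')$ still maps to $c'$ under $p$ and now lies in $\ker g$. Exactness at $\coker f$ and $\coker g$ is the formal dual of these arguments, using injectivity of $i$ and surjectivity of $p$ in reversed roles.

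The main (and only) obstacle is bookkeeping: one must verify that the chosen representatives behave correctly modulo images and that each step indeed produces a well-defined element in the target. Because the hypotheses include exactness of both rows and commutativity of both squares, each diagram chase closes after at most two lifts. Since the category of graded $A$-modules is abelian and all maps in the chase are graded, no additional work is needed to confirm that the resulting six-term sequence lives in the graded category.
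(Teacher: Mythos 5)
The paper does not actually prove this lemma---it is cited directly from Weibel---and your diagram chase is precisely the standard argument given in that source: the construction of $\d$ via a lift through $p$ and $i$, the well-definedness check using $\ker p = \im\, q$ and commutativity of the left square, and the routine exactness verifications at the six positions are all correct. The only point stated a bit too casually is that $\d$ is an $A$-module map (not merely $\k$-linear), but this follows from the same chase since $a\cdot b'$ is a lift of $a\cdot c'$, so there is no gap.
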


A nice consequence of the Snake Lemma is the next result.

\begin{lemma}[\cite{Weibel}, Exercise 3.2.5]
\label{Weibel exercise}
Let $A$ be a graded algebra; let $M$ be a finitely presented graded $A$-module. Let $F$ be a finitely generated graded $A$-module, and $\phi: F \to M$ a graded surjective morphism. Then $\ker \phi$ is finitely generated.
\end{lemma}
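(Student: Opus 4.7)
The plan is to reduce the claim to a Schanuel-style argument via a pullback construction and the Snake Lemma. First, since $F$ is finitely generated, choose a graded surjection $\pi \colon A^m \to F$ for some $m$. The composition $\phi\pi \colon A^m \to M$ is then a graded surjection onto $M$ from a finitely generated free module; denote its kernel by $K'$. One checks that $\pi$ restricts to a surjection $K' \to \ker\phi$: given $y \in \ker\phi$, lift it through $\pi$ to some $x \in A^m$, and then $\phi\pi(x) = \phi(y) = 0$ forces $x \in K'$. Thus it suffices to prove that $K'$ is finitely generated, since $\ker\phi$ will then be a quotient of a finitely generated module.

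To bound $K'$, use the finite presentation of $M$: fix a short exact sequence $0 \to K \to G \xrightarrow{p} M \to 0$ with $G$ finitely generated graded free and $K$ finitely generated. Form the graded pullback $P = A^m \times_M G$, namely the graded submodule of $A^m \oplus G$ consisting of pairs $(x,g)$ with $\phi\pi(x) = p(g)$. Projection onto the $G$-coordinate is surjective because $\phi\pi$ is, and its kernel (coming from pairs with $g = 0$) is a copy of $K'$; projection onto the $A^m$-coordinate is surjective because $p$ is, with kernel a copy of $K$. This produces two short exact sequences of graded modules,
$$0 \to K \to P \to A^m \to 0 \qquad \text{and} \qquad 0 \to K' \to P \to G \to 0.$$

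From the first sequence $P$ is finitely generated, being a graded extension of the finitely generated modules $K$ and $A^m$. In the second sequence $G$ is graded free, hence graded projective, so the sequence splits and $K'$ becomes a graded direct summand of $P$; consequently $K'$ is finitely generated. There is no substantial obstacle here: the only thing to verify is that the pullback and splitting constructions respect the graded structure, which is automatic since all morphisms in sight are homogeneous of degree zero.
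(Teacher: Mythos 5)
Your proof is correct. Note, though, that the paper itself supplies no argument for this lemma: it is quoted from Weibel (Exercise 3.2.5) and is explicitly framed as ``a nice consequence of the Snake Lemma,'' i.e.\ the intended route is a diagram chase comparing a finite presentation $A^b \to A^a \to M \to 0$ with the given surjection $F \to M$ and reading off $\ker\phi$ from the resulting six-term exact sequence. You instead run a pullback/Schanuel argument: after replacing $F$ by a free cover $A^m$, the pullback $P = A^m \times_M G$ sits in the two short exact sequences you display, the first showing $P$ is finitely generated and the second splitting because $G$ is graded free, so that $K'$ is a graded direct summand of $P$ (indeed $K' \oplus G \cong K \oplus A^m$). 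All the individual steps check out: $\pi$ does carry $K'$ onto $\ker\phi$, an extension of finitely generated graded modules is finitely generated, and the splitting can be chosen homogeneous by lifting a homogeneous basis of $G$. Your approach avoids the connecting homomorphism entirely and yields the stronger Schanuel isomorphism as a byproduct; the Snake Lemma route is marginally shorter and fits the paper's toolkit, since Lemma \ref{snake} is already on the table. One small remark: the initial reduction to a free cover is not actually needed --- the same pullback taken directly over $F$ (i.e.\ $P = F \times_M G$) gives $0 \to K \to P \to F \to 0$ and $0 \to \ker\phi \to P \to G \to 0$, and the identical splitting argument finishes the proof one step sooner.
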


The next result is well known and a proof can easily be given from definitions so we omit a proof.
\begin{prop}
\label{finitely generated submodule of coherent module is coherent}
Every finitely generated graded submodule of a graded coherent module is graded coherent.
\end{prop}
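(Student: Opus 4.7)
The plan is to verify directly the two defining conditions of graded coherence for a finitely generated graded submodule $N \subseteq M$, assuming $M$ is graded coherent. Condition (1), that $N$ is finitely generated, holds by hypothesis. For condition (2), I would let $\phi : A^n \to N$ be an arbitrary graded morphism and show that $\ker \phi$ is finitely generated.

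The key observation is that the definition of graded coherent module permits the morphism $A^n \to M$ to be \emph{not necessarily surjective}. Letting $\iota : N \hookrightarrow M$ denote the inclusion, the composite $\iota \circ \phi : A^n \to M$ is a graded morphism whose kernel equals $\ker \phi$, since $\iota$ is injective. Because $M$ is graded coherent, $\ker(\iota \circ \phi)$ is finitely generated, and hence so is $\ker \phi$. There is no real obstacle here; the result is essentially an immediate consequence of the ``not necessarily surjective'' clause built into the definition of graded coherent module, which is precisely why that clause is included rather than just requiring kernels of presentations to be finitely generated.
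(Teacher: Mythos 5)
Your proof is correct and is exactly the direct, from-the-definitions argument the paper has in mind (the paper omits the proof, remarking only that "a proof can easily be given from definitions"). The key point you identify -- that the "not necessarily surjective" clause in the definition lets you compose with the inclusion $N \hookrightarrow M$ and conclude immediately -- is precisely what makes the omitted proof trivial.
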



A fundamental fact that we use in Section 3 is the following well-known result. We include a proof for the sake of keeping this paper self-contained.

\begin{prop}
\label{2 of 3}
Let $A$ be a graded algebra. Let $M_1, M_2, M_3$ be graded $A$-modules and suppose
$$\xymatrix{
& 0 \ar[r] & M_1 \ar[r]^{i} & M_2 \ar[r]^{j} & M_3 \ar[r] &0
}$$
is an exact sequence. If any two of $M_1, M_2, M_3$ are graded  coherent, then so is the third.
\end{prop}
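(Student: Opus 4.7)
The plan is to handle the three cases separately depending on which two of $M_1, M_2, M_3$ are assumed coherent; in each case I would verify both that the remaining module is finitely generated and that the kernel of an arbitrary graded map from a finitely generated free module into it is finitely generated. Two tools would recur: Proposition~\ref{finitely generated submodule of coherent module is coherent} and Lemma~\ref{Weibel exercise}.

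First, suppose $M_2$ and $M_3$ are coherent. Any finitely generated graded submodule of $M_1$ embeds as a finitely generated graded submodule of the coherent module $M_2$, hence is finitely presented by Proposition~\ref{finitely generated submodule of coherent module is coherent}. To see that $M_1$ is itself finitely generated, I would surject $A^k \twoheadrightarrow M_2$ from generators of $M_2$, compose with $j$ to get a map $A^k \to M_3$ whose kernel is finitely generated by coherence of $M_3$, and note that the image of this kernel in $M_2$ is exactly $i(M_1)$.

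Next, suppose $M_1$ and $M_3$ are coherent. Finite generation of $M_2$ follows from that of $M_1$ and $M_3$. For a graded map $\psi: A^n \to M_2$, I would set $K = \psi^{-1}(i(M_1)) = \ker(j\psi)$, which is finitely generated because $M_3$ is coherent. The image $\psi(K) \subseteq i(M_1) \cong M_1$ is a finitely generated submodule of coherent $M_1$, hence finitely presented by Proposition~\ref{finitely generated submodule of coherent module is coherent}; since $K$ is finitely generated and surjects onto $\psi(K)$, Lemma~\ref{Weibel exercise} would then give that $\ker\psi$ is finitely generated.

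Finally, suppose $M_1$ and $M_2$ are coherent. Then $M_3$ is finitely generated as a quotient of $M_2$. Given $\phi: A^n \to M_3$, I would use freeness of $A^n$ to lift to $\psi: A^n \to M_2$ with $j\psi = \phi$. Setting $N = \psi(A^n)$, the identity $\ker\phi = \psi^{-1}(i(M_1))$ produces the exact sequence
$$0 \to \ker\psi \to \ker\phi \to N \cap i(M_1) \to 0.$$
Here $\ker\psi$ is finitely generated by coherence of $M_2$, so the hard part is showing that $N \cap i(M_1)$ is finitely generated. My approach is to realize this intersection as the kernel of the sum map $N \oplus i(M_1) \to N + i(M_1)$, $(n,m) \mapsto n - m$. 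Each of $N$, $i(M_1)$, and $N + i(M_1)$ is a finitely generated submodule of coherent $M_2$, so $N + i(M_1)$ is finitely presented by Proposition~\ref{finitely generated submodule of coherent module is coherent}; Lemma~\ref{Weibel exercise} applied to the finitely generated $N \oplus i(M_1)$ surjecting onto it then forces the kernel, and hence $N \cap i(M_1)$, to be finitely generated. This intersection argument is the main obstacle: it is the only point where the two coherence hypotheses must be combined in a non-trivial way via the sum-map trick.
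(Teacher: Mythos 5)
Your proof is correct; no gaps. Cases one and two are in substance the same as the paper's (the paper routes the second case through the Snake Lemma and its Lemma~\ref{finitely generated kernel}, but the underlying computation --- $\ker(j\psi)$ is finitely generated by coherence of $M_3$, its image in $M_1$ is finitely presented, hence $\ker\psi$ is finitely generated --- is identical to yours). Where you genuinely diverge is the third case. The paper builds the commutative diagram with exact top row $0 \to A^m \to A^m \oplus A^n \to A^n \to 0$ and vertical map $(i\circ\pi)\oplus g$ into $M_2$, and a single application of the Snake Lemma exhibits $\ker f$ as a quotient of $\ker((i\circ\pi)\oplus g)$, which is finitely generated by one invocation of coherence of $M_2$. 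You instead decompose $\ker\phi$ as an extension of $N \cap i(M_1)$ by $\ker\psi$ and dispose of the intersection via the sum map $N \oplus i(M_1) \to N + i(M_1)$ together with Lemma~\ref{Weibel exercise}. Both are valid; the paper's version is more economical (one diagram, one appeal to coherence of $M_2$, and it never needs coherence of $M_1$ beyond finite generation at that step), while yours isolates the reusable fact that the intersection of two finitely generated submodules of a coherent module is finitely generated, at the cost of invoking coherence of $M_2$ twice (once for $\ker\psi$, once for $N+i(M_1)$) and needing $M_1$ finitely generated for the sum-map step.
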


\begin{proof}
Firstly, suppose that $M_2$ and $M_3$ are graded coherent. We have to show that $M_1$ is graded coherent. Since $i$ is injective, it suffices to show that $ i(M_1)$ is graded coherent. Now $i(M_1)$ is a submodule of $M_2$, so, by Proposition \ref{finitely generated submodule of coherent module is coherent}, it will suffice to prove that $i(M_1)$ is finitely generated. Since $M_2$ is finitely generated we may choose a surjection $\pi: A^{a} \to M_2$. Notice that the composite map $A^{a}  \to M_3$ is surjective;  moreover, $M_3$ is coherent, hence $M_3$ is finitely presented. Therefore, by Lemma \ref{Weibel exercise}, we know that the kernel, $K$, of $A^a \to M_3$ is finitely generated. It is straightforward to check that $\pi$ maps $K$ onto $i(M_1)$, so 
it follows that $i(M_1)$ is finitely generated, as desired.

Secondly, suppose that $M_1$ and $M_3$ are graded coherent. We must show that $M_2$ is graded coherent. By definition $M_1$ and $M_3$ are finitely generated, so it is easy to show that $M_2$ is finitely generated. Let $g: A^n \to M_2$ be any graded morphism. We must prove that $\ker g$ is finitely generated. Consider the commutative diagram 
$$\xymatrix{
& &0 \ar[d] \ar[r] & A^n \ar[d]^{g} \ar[r]^{{\rm id}} & A^n \ar[d]^{j \circ g} \ar[r] & 0 \\
& 0 \ar[r] & M_1 \ar[r]^{i} & M_2 \ar[r]^{j} & M_3.&
}$$
The rows are exact, so, by the Snake Lemma, there is an exact sequence: 
$$0 \to \ker g \to \ker(j \circ g) \to M_1.$$

Since $M_3$ is graded coherent we know, by definition, that $\ker(j \circ g)$ is finitely generated. Let $N = \im(\ker(j \circ g) \to M_1)$. Then $N$ is finitely generated. Using the fact that $M_1$ is graded coherent, we know, by Proposition \ref{finitely generated submodule of coherent module is coherent}, that $N$ is graded coherent. Now consider the exact sequence: $$0 \to \ker g \to \ker(j \circ g) \to N \to 0.$$ By the first paragraph of this proof we conclude that the kernel of the map $$\ker(j \circ g) \to N$$ is finitely generated. From the exact sequence we know that $\ker g$ is isomorphic to this kernel, whence, $\ker g$ is finitely generated. We conclude that $M_2$ is graded coherent.

Thirdly, suppose that $M_1$ and $M_2$ are graded coherent. We must prove that $M_3$ is graded coherent. Since $M_2$ is finitely generated it is clear that $M_3$ is finitely generated. Let $f: A^n \to M_3$ be an arbitrary graded morphism. We need to show that $\ker f$ is finitely generated. Using the fact that $A^n$ is projective, construct a graded morphism $g: A^n \to M_2$ such that 
$$\xymatrix{
& & A^n \ar[ld]_{g} \ar[d]^{f} & \\
&M_2 \ar[r]^{j} & M_3 \ar[r] & 0
}$$ 
commutes. Since $M_1$ is finitely generated, choose a graded surjection $\pi: A^m \to M_1$. Now consider the diagram
$$\xymatrix{
& 0 \ar[r] & A^m \ar[d]^{\pi} \ar[r] & A^m \oplus A^n \ar[d]^{(i \circ \pi) \oplus g} \ar[r] & A^n \ar[d]^{f} \ar[r] & 0 \\
& 0 \ar[r] & M_1 \ar[r]^{i} & M_2 \ar[r]^{j} & M_3 \ar[r] & 0,
}$$
where the maps in the top row are the canonical ones. So the top row is exact. It is clear by inspection that the diagram commutes. Hence, by the Snake Lemma, we have an exact sequence 
$$\ker((i \circ \pi) \oplus g) \to \ker f \to (\coker \pi = 0).$$
Since $M_2$ is graded coherent we know that $\ker((\i \circ \pi) \oplus g)$ is finitely generated. It follows that $\ker f$ is finitely generated, and therefore $M_3$ is graded coherent. 
\end{proof}

A careful reading of the first paragraph of the last proof shows that we have proved the following porism. We need this result in Section 3, so we separate it out here.

\begin{lemma}
\label{finitely generated kernel}
Let $A$ be a graded algebra. Suppose that $$0 \to M_1 \to M_2 \to M_3 \to 0$$ is an exact sequence of graded $A$-modules such that $M_3$ is graded coherent and $M_2$ is finitely generated. Then $M_1$ is finitely generated.
\end{lemma}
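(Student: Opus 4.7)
The plan is to extract the argument already contained in the first paragraph of the proof of Proposition 2.7, since the author explicitly notes that the result is a porism of that argument. The hypothesis that $M_3$ is graded coherent is used only to conclude that $M_3$ is finitely presented, and the hypothesis that $M_2$ is finitely generated is what allows us to begin with a finite free cover.

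First, using the assumption that $M_2$ is finitely generated, I would choose a graded surjection $\pi: A^a \to M_2$. Composing with $j: M_2 \to M_3$ gives a graded surjection $j \circ \pi : A^a \to M_3$. Since $M_3$ is graded coherent, it is in particular finitely presented, so Lemma \ref{Weibel exercise} applies to this surjection and yields that $K := \ker(j \circ \pi)$ is a finitely generated graded submodule of $A^a$.

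Next I would identify $\pi(K)$ with $i(M_1)$. By exactness at $M_2$ we have $i(M_1) = \ker j$, and since $\pi$ is surjective, an element $x \in M_2$ lies in $\ker j$ precisely when $x = \pi(y)$ for some $y \in A^a$ with $(j\circ \pi)(y) = 0$, i.e. $y \in K$. Thus $\pi$ restricts to a surjection $K \to i(M_1)$, so $i(M_1)$ is a finitely generated graded submodule of $M_2$. Finally, since $i$ is injective, $M_1 \cong i(M_1)$ as graded $A$-modules, so $M_1$ is finitely generated as required.

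There really is no main obstacle: the lemma is a direct repackaging of the first paragraph of the proof of Proposition \ref{2 of 3}, and the only ingredients are the finite presentability of $M_3$ (to invoke Lemma \ref{Weibel exercise}) and the injectivity of $i$ (to transfer finite generation from $i(M_1)$ back to $M_1$).
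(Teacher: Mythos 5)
Your proposal is correct and is essentially identical to the paper's own argument, which is precisely the first paragraph of the proof of Proposition \ref{2 of 3} (the author explicitly presents the lemma as a porism of that paragraph). The key steps --- a finite free cover of $M_2$, Lemma \ref{Weibel exercise} applied to the induced surjection onto the finitely presented module $M_3$, and the identification $\pi(\ker(j\circ\pi)) = i(M_1)$ --- all match.
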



An easy consequence of Proposition \ref{2 of 3} is the following.

\begin{cor}
\label{finite sums are coherent}
Let $A$ be a graded algebra. If $M_1$, $M_2$ are graded coherent $A$-modules, then $M_1 \oplus M_2$ is graded coherent.
\end{cor}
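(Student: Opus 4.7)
The plan is to derive this directly from Proposition \ref{2 of 3}. Given graded coherent $A$-modules $M_1$ and $M_2$, consider the canonical split short exact sequence
$$\xymatrix{0 \ar[r] & M_1 \ar[r]^-{\iota_1} & M_1 \oplus M_2 \ar[r]^-{\pi_2} & M_2 \ar[r] & 0,}$$
where $\iota_1$ is the inclusion into the first summand and $\pi_2$ is the projection onto the second summand. Both maps are graded $A$-module morphisms, and the sequence is exact.

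Since $M_1$ and $M_2$ are graded coherent by hypothesis, Proposition \ref{2 of 3} (in the case where the two outer terms are coherent) immediately implies that the middle term $M_1 \oplus M_2$ is graded coherent. This completes the proof.

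There is no real obstacle here: the only minor bookkeeping point is checking that $\iota_1$ and $\pi_2$ are indeed graded morphisms, which is clear from the definition of the grading on $M_1 \oplus M_2$, namely $(M_1 \oplus M_2)_n = (M_1)_n \oplus (M_2)_n$.
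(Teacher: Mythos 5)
Your proof is correct and matches the paper's intent exactly: the paper states the corollary as an easy consequence of Proposition \ref{2 of 3}, and the split exact sequence $0 \to M_1 \to M_1 \oplus M_2 \to M_2 \to 0$ together with the "outer terms coherent implies middle term coherent" case of that proposition is precisely the intended argument.
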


Let us conclude this section by remarking that if $A$ is a graded left coherent algebra, then the category of graded left coherent $A$-modules coincides with the category of graded finitely presented $A$-modules. Additionally, as follows immediately from Proposition \ref{2 of 3}, this category is an abelian category.
\section{Main Theorems}
\label{main results}

In this section we will give proofs of our main results. One of the motivations of this paper was to see if one could relax a hypothesis in Piontkovskii's theorem, Theorem \ref{Piontkovskii coherence}. Unfortunately, this is not the case, as we now explain.

\begin{defn}
Let $C = \k \la x, y, z \ra/ \la yz-zy, xz \ra$, considered as a connected $\N$-graded $\k$-algebra with $\deg(x) = \deg(y) = \deg(z) = 1$. Let $I$ be the two-sided ideal of $C$ generated by $z$.
\end{defn}

Consider the total order on the set of monomials in the free algebra $\k \la x, y, z \ra$ determined by $x < z < y$ and left-lexicographic order. With this order, in the terminology of \cite{Bergman}, there are no ambiguities to resolve. Hence, by \cite{Bergman} Theorem 1.2, $C$ has a monomial basis consisting of all monomials not containing $yz$ and $xz$. Denote this basis by $\mathscr{B}$.

We claim that $C$ is not graded left coherent. To see this, consider the left ideal of $C$ generated by $z$. Observe that for any $i \geq 0$ we have $xy^iz = xzy^i = 0.$ Moreover, considering the basis $\mathscr{B}$. makes it clear that the left annihilator ideal of $z$ in $C$ is generated by $\{xy^i : i \geq 0\}$, and that this ideal is not finitely generated as a left ideal of $C$. Hence $C$ is not graded left coherent.

Next, it is easy to show that $I = zC$. We claim that $I$ is free as a right $C$-module. To see this it suffices to show that the right annihilator ideal of $z$ in $C$ is the zero ideal. One easily checks that this is the case using the basis $\mathscr{B}$. Lastly, it is also easy to check that as algebras $C/I \cong \k \la x, y \ra$. By Corollary 3.2 of \cite{Polishchuk}, $\k \la x, y \ra$ is graded left coherent. Hence we have proved the following result. 

\begin{thm}
\label{counterexample to naive} There are graded algebras $A$ and $B$ such that $A$ is a graded right-free extension of $B$, the algebra $B$ is graded left coherent, and $A$ is not graded left coherent.
\end{thm}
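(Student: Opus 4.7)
The plan is to construct an explicit small example — a three-generator algebra $A$ with a principal two-sided ideal $I$ — for which (i) $I$ is free of rank one as a right $A$-module, (ii) the quotient $B = A/I$ is graded left coherent, but (iii) $A$ itself is not graded left coherent. The strategy for (iii) is to produce a single principal left ideal $Az$ whose left annihilator fails to be finitely generated as a left ideal; this forces $Az$ to be finitely generated but not finitely presented. A bare relation $xz = 0$ contributes only one annihilator of $z$, so I would combine it with a commutation relation $yz = zy$, which manufactures the infinite family of annihilators $xy^i \cdot z = xzy^i = 0$ for every $i \geq 0$.

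Concretely I would take $A = \k\langle x, y, z\rangle / \langle yz - zy,\ xz\rangle$ with all three generators in degree one and $I = zA$. The first step is to pin down a $\k$-basis of $A$ via Bergman's Diamond Lemma: with the left-lexicographic order induced by $x < z < y$, the leading terms of the two relations are $yz$ and $xz$, and these exhibit no overlap or inclusion ambiguities, so $A$ has the monomial basis $\mathscr{B}$ consisting of all words in $x,y,z$ that avoid $yz$ and $xz$ as subwords.

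With $\mathscr{B}$ in hand, the required verifications reduce to direct inspection. For any basis word $w$ the product $zw$ is again in $\mathscr{B}$ — the leading $z$ can create no new forbidden subword — so right multiplication by elements of $A$ is injective on $\k z$, and hence $I = zA$ is free of rank one on $z$ as a right $A$-module. Killing $z$ trivializes both defining relations, so $A/I \cong \k\langle x, y\rangle$, which is graded left coherent by Corollary 3.2 of \cite{Polishchuk}. Finally, rewriting $wz$ for any basis word $w$ via $yz \to zy$ and $xz \to 0$ shows that $wz = 0$ precisely when $w$ ends in a factor of the form $xy^k$ with $k \geq 0$; hence $\mathrm{Ann}_\ell(z)$ is generated as a left ideal of $A$ by the infinite family $\{xy^k : k \geq 0\}$.

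The main (but still elementary) obstacle is to confirm that no finite subfamily of $\{xy^k : k \geq 0\}$ suffices. Given a candidate finite generating set $\{xy^{k_1}, \dots, xy^{k_n}\}$ with $K = \max_j k_j$, I would argue directly, by comparing coefficients in the basis $\mathscr{B}$, that any basis word $w$ satisfying $w \cdot xy^{k_j} = xy^{K+1}$ must be the empty word — otherwise the left-hand side would contain at least two $x$'s — forcing $k_j = K+1 > K$, a contradiction. Hence $xy^{K+1} \notin \sum_j A \cdot xy^{k_j}$, so $\mathrm{Ann}_\ell(z)$ is not finitely generated, $Az$ is not finitely presented, and $A$ is not graded left coherent, completing the construction.
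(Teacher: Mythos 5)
Your construction is precisely the one the paper uses: the algebra $\k\langle x,y,z\rangle/\langle yz-zy,\ xz\rangle$ with the ideal generated by $z$, the same Diamond Lemma basis from the order $x<z<y$, the same identification of the quotient with $\k\langle x,y\rangle$ via Polishchuk's Corollary 3.2, and the same infinite family $\{xy^k : k\ge 0\}$ of left annihilators of $z$. The proposal is correct and even supplies the coefficient-comparison argument that no finite subfamily generates the annihilator, a step the paper leaves to inspection of the basis.
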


Throughout the rest of this section let us fix the following notation. Let $A$ and $B$ be graded $\k$-algebras such that $A$ is a graded right-free extension of $B$ by the ideal $I$. There is no harm in identifying $B$ with $A/I$, so let us do so. Furthermore, we assume that $B$ is graded left coherent. Let $J$ denote a finitely generated graded left ideal of $A$. 

Our proof of Theorem \ref{new criterion on coherence} is based on an analysis of a certain spectral sequence, and we need some detailed homological information about two particular $B$-modules. Therefore we begin with two technical results.

\begin{lemma}
\label{Tor_0 has a good resolution}
The left $B$-module $\Tor_0^A(B, A/J)$ has a projective resolution consisting entirely of finitely generated free left $B$-modules.
\end{lemma}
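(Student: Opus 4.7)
The plan is to first identify $\Tor_0^A(B, A/J)$ explicitly as a $B$-module, show it is graded coherent, and then exploit graded coherence of $B$ to build the desired resolution inductively.

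First I would compute
\[
\Tor_0^A(B, A/J) \;=\; B \otimes_A (A/J) \;=\; A/(I+J),
\]
which, upon identifying $B$ with $A/I$, is naturally isomorphic to $B/\pi(J)$ as a left $B$-module. Since $J$ is finitely generated as a graded left ideal of $A$, and $\pi \colon A \to B$ is a surjective graded morphism, $\pi(J)$ is a finitely generated graded left ideal of $B$. Using the assumption that $B$ is graded left coherent, $_B B$ is a graded coherent $B$-module, so by Proposition~\ref{finitely generated submodule of coherent module is coherent} its finitely generated submodule $\pi(J)$ is also graded coherent.

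Next, I would apply the ``$2$ out of $3$'' principle of Proposition~\ref{2 of 3} to the short exact sequence
\[
0 \to \pi(J) \to B \to B/\pi(J) \to 0
\]
to conclude that $B/\pi(J) \cong \Tor_0^A(B,A/J)$ is a graded coherent left $B$-module.

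Finally, I would build the resolution by induction. Set $M_0 = \Tor_0^A(B,A/J)$. Since $M_0$ is finitely generated, choose a graded surjection $B^{n_0} \to M_0$; let $M_1$ be its kernel. Because $M_0$ is finitely presented (as it is graded coherent), $M_1$ is finitely generated by Lemma~\ref{Weibel exercise}. Moreover, $M_1$ sits inside $B^{n_0}$, which is graded coherent by Corollary~\ref{finite sums are coherent}; so by Proposition~\ref{finitely generated submodule of coherent module is coherent} the finitely generated module $M_1$ is again graded coherent. Iterating this construction yields a resolution of $\Tor_0^A(B,A/J)$ by finitely generated graded free left $B$-modules, as required.

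There is no real obstacle here beyond careful bookkeeping: the main point is that graded coherence of $B$ propagates to $\pi(J)$ and to $B/\pi(J)$, and that the syzygies at each stage are forced to be coherent by virtue of being finitely generated submodules of finitely generated free $B$-modules.
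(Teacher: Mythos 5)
Your proof is correct and follows essentially the same route as the paper: identify $\Tor_0^A(B,A/J)$ with $B$ modulo the (finitely generated) image of $J$, use coherence of $B$ together with Propositions \ref{finitely generated submodule of coherent module is coherent} and \ref{2 of 3} to conclude that this quotient is graded coherent, and then iterate to build the free resolution. The only difference is cosmetic: you spell out the inductive syzygy construction that the paper compresses into ``repeated use of Corollary \ref{finite sums are coherent}.''
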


\begin{proof}
We begin by identifying $\Tor_0^A(B, A/J) = B \tsr_A (A/J)$ as left $B$-modules. For $b \in B$, $a \in A$ we use the notation $b.a$ for the right action of $A$ on $B$. It is well known and easy to prove that there is a canonical isomorphism of left $B$-modules $$B \tsr_A (A/J) \cong B/(B.J).$$ We note for use below that $B/(B.J)$ is a cyclic left $B$-module generated by $1_B+B.J$. 

Next, we claim that $B.J$ is a finitely generated graded left ideal of $B$. It is obvious that $B.J$ is a graded left ideal of $B$. To prove $B.J$ is finitely generated as a $B$-module, let $x_1, \ldots, x_n$ be homogeneous generators of $J$ as a left $A$-module. Observe that $$x_i+I = (1_A+I)(x_i+I) = 1_B.x_i \in B.J.$$ Let $b \in B$ and let $y \in J$. Write $y = \sum a_i x_i$ for some $a_i \in A$. Then $$b.y = b.\sum a_i x_i = \sum(b.a_i).x_i = \sum (b.a_i)(x_i+I).$$ Hence $\{x_i+I: 1 \leq i \leq n\}$ generates $B.J$ as a left $B$-module.

We have shown that $B.J$ is a finitely generated submodule of $B$, and $B$ is graded left coherent as a $B$-module. Hence, by Proposition \ref{finitely generated submodule of coherent module is coherent}, $B.J$ is a graded left coherent $B$-module.
Consideration of the exact sequence of left $B$-modules $$0 \to B.J \to B \to B/B.J \to 0$$ and Proposition \ref{2 of 3} makes it clear that $B/B.J$ is a graded left coherent $B$-module. Then, by repeated use of Corollary \ref{finite sums are coherent}, it follows that $B/B.J$ has a projective resolution consisting entirely of finitely generated free left $B$-modules.
\end{proof}

\begin{lemma}
\label{Tor_1 has good properties}
The left $B$-module $\Tor_1^A(B, A/J)$ is finitely generated. Moreover, there is a canonical isomorphism of left $B$-modules $$\Tor_1^A(B, A/J) \cong (I \cap J)/(IJ).$$
\end{lemma}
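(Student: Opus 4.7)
The plan is to obtain the isomorphism $\Tor_1^A(B, A/J) \cong (I \cap J)/(IJ)$ from the long exact sequence in $\Tor$ coming from the short exact sequence $0 \to J \to A \to A/J \to 0$, then deduce finite generation over $B$ by an application of Lemma \ref{finitely generated kernel}.

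First I would apply $B \tsr_A -$ to the short exact sequence $0 \to J \to A \to A/J \to 0$. Since $A$ is a free left $A$-module, $\Tor_1^A(B, A) = 0$, and the tail of the long exact sequence becomes
$$0 \to \Tor_1^A(B, A/J) \to B \tsr_A J \to B \tsr_A A \to B \tsr_A (A/J) \to 0.$$
Next, I would identify each term. The standard identification $B \tsr_A A \cong B$ sends $b \tsr a \mapsto b.a$. Applying $- \tsr_A J$ to $0 \to I \to A \to B \to 0$ yields $B \tsr_A J \cong J/(IJ)$ canonically, where $IJ$ is the image of $I \tsr_A J \to A \tsr_A J = J$. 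Under these identifications, the map $B \tsr_A J \to B \tsr_A A$ becomes $j + IJ \mapsto j + I$, whose kernel is exactly $(I \cap J)/(IJ)$ (note $IJ \subseteq I \cap J$ because $I$ is two-sided and $J$ is a left ideal). This gives the desired canonical isomorphism.

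For finite generation, I would extract from the above exactness the short exact sequence of graded left $B$-modules
$$0 \to (I \cap J)/(IJ) \to J/(IJ) \to B.J \to 0,$$
where the image in $B$ is $B.J$ by the computation above. The left $A$-structure on $J/(IJ)$ descends through $I$ to a left $B$-structure, and if $x_1,\dots,x_n$ is a finite homogeneous generating set of $J$ as a left $A$-module, then their images generate $J/(IJ)$ as a left $B$-module; hence $J/(IJ)$ is finitely generated over $B$. In the proof of Lemma \ref{Tor_0 has a good resolution} it was shown that $B.J$ is a graded coherent left $B$-module. Applying Lemma \ref{finitely generated kernel} to the above sequence then shows that $(I \cap J)/(IJ)$, and therefore $\Tor_1^A(B, A/J)$, is finitely generated over $B$.

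The only step that is not essentially bookkeeping is the identification of the connecting map and verification that its kernel is genuinely $(I \cap J)/(IJ)$ rather than a quotient or subquotient thereof; this rests on the inclusion $IJ \subseteq I \cap J$ and the well-definedness of the descent to a $B$-module structure on $J/(IJ)$. Once that is in place, finite generation is immediate from the coherence machinery of Section \ref{preliminaries}.
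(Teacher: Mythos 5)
Your proposal is correct and follows essentially the same route as the paper: both apply $B \tsr_A -$ to $0 \to J \to A \to A/J \to 0$, identify the connecting term's kernel as $(I\cap J)/(IJ)$ via $B \tsr_A J \cong J/(IJ)$, and deduce finite generation from Lemma \ref{finitely generated kernel} using coherence of the image ($B.J$, i.e.\ $\im(1\tsr i)$) inside the coherent module $B$. The only difference is cosmetic: you carry out the identifications before the finite-generation argument, while the paper argues at the level of the tensor products first.
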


\begin{proof}
Let $K = \Tor_1^A(B, A/J)$. Consider the short exact sequence of $A$-modules $$0 \to J \to A \to A/J \to 0.$$ Applying the functor $B \tsr_A \underline{\hspace{.25cm}}$ and taking homology yields the exact sequence $$0 \to K \to B \tsr_A J \to B \tsr_A A \to B \tsr_A A/J \to 0.$$ In particular, $K = \ker(1 \tsr i: B \tsr_A J \to B \tsr_A A)$, where $i: J \to A$ is the inclusion map. Hence $$0 \to K \to B \tsr_A J \to \im(1 \tsr i) \to 0$$ is exact. Since $J$ is finitely generated as a left $A$-module, it is easy to prove that $B \tsr_A J$ is finitely generated as a left $B$-module, and therefore $\im(1 \tsr i)$ is a finitely generated $B$-submodule of $B \tsr_A A$. By assumption $B$ is graded left coherent, so $B \tsr_A A \cong B$ is graded coherent as a left $B$-module. Hence, by Proposition \ref{finitely generated submodule of coherent module is coherent}, we know $\im(1 \tsr i)$ is graded coherent. Thus, using Lemma \ref{finitely generated kernel}, it follows that $K$ is a finitely generated $B$-module, as desired. 

For the second statement, there are canonical isomorphisms $$B \tsr_A J  = (A/I) \tsr_A J \cong J/(IJ), \quad B \tsr_A A \cong B = A/I.$$ Under these isomorphisms the map $1 \tsr i: B \tsr_A J \to B \tsr_A A$ is identified with the map $$\pi: J/(IJ) \to A/I, \quad x + IJ \mapsto x+I \text{ for } x \in J.$$ It is clear that $x + IJ \in \ker \pi$ if and only if $x \in I \cap J$. Hence we have $$\Tor_1^A(B, A/J) \cong (I \cap J)/(IJ).$$ Let us note that the left $B$-module structure on $(I \cap J)/(IJ)$ is given by $$(a+I).(y+IJ) = ay + IJ \quad \text{ for } a \in A, y \in I \cap J.$$
\end{proof}

We are now prepared to prove our main result. 
\begin{thm}
\label{new criterion on coherence}
Let $A$ and $B$ be connected graded $\k$-algebras such that $B$ is graded left coherent. Suppose that $A$ is a graded right-free extension of $B$ by the ideal $I$. Then:
\begin{itemize}
\item[(1)] if $J$ is a finitely generated graded left ideal of $A$, then $(I \cap J)/(IJ)$ is a finitely generated left $B$-module;
\item[(2)] $A$ is graded left coherent if and only if for every finitely generated graded left ideal $J$ of $A$ the left $B$-module $(I \cap J)/(IJ)$ is finitely presented. 
\end{itemize}
\end{thm}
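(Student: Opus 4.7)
Part (1) is immediate from Lemma 3.3, which identifies $(I \cap J)/(IJ)$ with $\Tor_1^A(B, A/J)$ and establishes that the latter is finitely generated as a left $B$-module.

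For part (2), the crucial homological input I would exploit is that, since $I$ is free as a right $A$-module, the short exact sequence $0 \to I \to A \to B \to 0$ is a length-$1$ free resolution of $B$ by right $A$-modules; hence $\Tor_q^A(B, N) = 0$ for every $q \geq 2$ and every left $A$-module $N$. Conceptually this is the statement that the Cartan--Eilenberg change-of-rings spectral sequence $E_2^{p,q} = \Tor_p^B(\k, \Tor_q^A(B, -)) \Rightarrow \Tor_{p+q}^A(\k, -)$ has only two nonzero rows and collapses; in practice I would bypass the spectral sequence and extract what is needed directly from the associated Tor long exact sequences, where this vanishing is doing all the real work.

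For the forward direction, assuming $A$ is graded left coherent, I would first observe that $J$ is coherent (Proposition 2.6) and $A/J$ is coherent (Proposition 2.7), so $A/J$ admits a resolution by finitely generated free left $A$-modules $\cdots \to A^{n_2} \to A^{n_1} \to A^{n_0} \to A/J \to 0$. Tensoring over $A$ with $B$ and using Lemma 3.3 exhibits $(I \cap J)/(IJ)$ as a subquotient $\ker/\im$ inside $B^{n_1}$ with both $\ker$ and $\im$ finitely generated. Repeated application of Proposition 2.7, combined with graded coherence of $B$ (so finitely generated submodules of the $B^{n_i}$ are coherent), then shows this subquotient is itself coherent over $B$, and in particular finitely presented.

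For the converse, I would fix a finitely generated graded left ideal $J$ and a surjection $\phi: A^n \to J$ with kernel $K$, and aim to show $K$ is finitely generated. The plan has four steps. First, the long exact sequence of $\Tor^A(B, -)$ applied to $0 \to J \to A \to A/J \to 0$, together with the vanishing $\Tor_q^A(B, -) = 0$ for $q \geq 2$ and $\Tor_1^A(B, A) = 0$, gives $\Tor_1^A(B, J) = 0$. Second, tensoring $0 \to K \to A^n \to J \to 0$ with $B$ over $A$ therefore yields a short exact sequence of left $B$-modules $0 \to K/(IK) \to B^n \to J/(IJ) \to 0$. Third, I would show $J/(IJ)$ is finitely presented over $B$ by considering the short exact sequence $0 \to (I \cap J)/(IJ) \to J/(IJ) \to B.J \to 0$: the hypothesis (together with graded coherence of $B$, which forces finitely presented to coincide with coherent) makes the left term coherent, and $B.J$ is a finitely generated left ideal of the coherent algebra $B$, hence coherent, so Proposition 2.7 makes $J/(IJ)$ coherent. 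Fourth, Lemma 2.5 forces $K/(IK)$ to be finitely generated over $B$, and a standard graded Nakayama argument (valid because $I \subseteq A_+$ and $K$ is bounded below in its grading) lifts this to $K$ being finitely generated as an $A$-module. The chief obstacle I anticipate is the bridge identity $\Tor_1^A(B, J) = 0$: this vanishing, which rests squarely on the right-freeness of $I$, is what permits coherence data for $(I \cap J)/(IJ)$ over $B$ to be transported into coherence data for $J$ over $A$, and without it the whole strategy breaks down.
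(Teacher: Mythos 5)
Your proof is correct, and it reaches both implications of part (2) by a genuinely different route from the paper's. The paper runs the entire argument through the change-of-rings spectral sequence $E^2_{p,q}=\Tor_p^B(\k,\Tor_q^A(B,A/J))\Rightarrow \Tor_{p+q}^A(\k,A/J)$: the two-row collapse forced by the right-freeness of $I$ reduces everything to $E^2_{2,0}$ and $E^2_{3,0}$ (finite-dimensional because $B/(B.J)$ admits a resolution by finitely generated free $B$-modules) together with $E^2_{1,1}=\Tor_1^B(\k,(I\cap J)/(IJ))$, and finite presentation of $J$ is tested by whether $\dim_\k\Tor_2^A(\k,A/J)<\infty$. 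You bypass the spectral sequence entirely: your forward implication computes $\Tor_1^A(B,A/J)$ as a subquotient of $B^{n_1}$ from a finitely generated free resolution of the coherent module $A/J$ over $A$, and your converse uses the vanishing $\Tor_1^A(B,J)=0$ to produce $0\to K/(IK)\to B^n\to J/(IJ)\to 0$, shows $J/(IJ)$ is coherent over $B$ via the extension by $B.J=(J+I)/I$ (that this is a finitely generated left ideal of $B$ is exactly the content of the first half of the paper's Lemma \ref{Tor_0 has a good resolution}), and then lifts finite generation of $K/(IK)$ to $K$ by graded Nakayama, which is legitimate since $\pi$ is a morphism of connected graded algebras (so $I\subseteq A_+$) and $K$ is bounded below. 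Each approach has its advantages: the paper's treats both directions uniformly and makes visible that $E^2_{1,1}$ is precisely the obstruction; yours is more elementary and self-contained, and in the converse direction it directly exhibits a finite generating set for the syzygy module $K$ rather than inferring its existence from a dimension count of $\Tor_2^A(\k,A/J)$. (Your internal references are slightly off --- the identification $(I\cap J)/(IJ)\cong\Tor_1^A(B,A/J)$ is the paper's Lemma \ref{Tor_1 has good properties}, and the kernel lemma you invoke is Lemma \ref{finitely generated kernel} --- but since you wrote blind this is immaterial.)
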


\begin{proof}
Statement (1) follows immediately from Lemma \ref{Tor_1 has good properties}.

Let $J$ be a finitely generated graded left ideal of $A$. Consider the first quadrant homology change-of-rings spectral sequence (see \cite{Weibel} Theorem 5.6.6 for example) $$E^2_{p, q} = \Tor_p^B(\k, \Tor_q^A(B, A/J)) \implies \Tor_{\ast}^A(\k, A/J).$$ 

First, assume that the left $B$-module $(I \cap J)/(IJ)$ is finitely presented. We must prove that $J$ is finitely presented as a left $A$-module. Equivalently, we must show that $$\dim_{\k} \Tor_2^A(\k, A/J) < \infty.$$

Since $I$ is a free right $A$-module, $$0 \to I \to A \to B \to 0$$ is a graded free resolution of $B$ as a right $A$-module. Therefore $\Tor_q^A(B, A/J) = 0$ for all $q \geq 2$. It follows from the spectral sequence that $$\dim_{\k} \Tor_2^A(\k, A/J) \leq \dim_{\k} E^2_{2, 0} + \dim_{\k} E^2_{1, 1}.$$ Now, Lemma \ref{Tor_0 has a good resolution} implies that $\dim_{\k} E^2_{2, 0} < \infty$. Lemma \ref{Tor_1 has good properties} and our assumption implies that $\dim_{\k} E^2_{1, 1} < \infty$. Thus, $\dim_{\k} \Tor_2^A(\k, A/J) < \infty.$ We conclude that $A$ is graded left coherent.

Finally, suppose that $J$ is a finitely generated graded left ideal of $A$ such that the left $B$-module $(I \cap J)/(IJ)$ is not finitely presented. We know, by statement (1), that the left $B$-module $(I \cap J)/(IJ)$ is finitely generated. So there is an exact sequence of graded $B$-modules $$F_1 \to F_0 \to (I \cap J)/(IJ) \to 0 \quad (\ast),$$ where $F_0$ is a finitely generated free module, and $F_1$ is a non-finitely generated free module. 

Recall that $\Tor_q^A(B, A/J) = 0$ for all $q \geq 2$, so one easily proves that, as graded $\k$-vector spaces, $$\Tor_2^A(\k, A/J) \cong E^3_{2,0} \oplus E^3_{1,1}.$$ We claim that $E^3_{1,1}$ is infinite dimensional as a $\k$-vector space. To see this, note that $E^3_{1,1}$ is the homology of $$\xymatrix{0 \ar[r] & E^2_{3,0} \ar[r]^{d^2_{3,0}} & E^2_{1,1} \ar[r] &0}$$
at the $E^2_{1,1}$ position. The exact sequence $(\ast)$ and Lemma \ref{Tor_1 has good properties} shows that $$E^2_{1,1} = \Tor_1^B(\k,\Tor_1^A(B, A/J))$$ is infinite dimensional as a $\k$-vector space. Furthermore, Lemma \ref{Tor_0 has a good resolution} implies, in particular, that $$E^2_{3,0} = \Tor^B_3(\k, \Tor^A_0(B, A/J))$$ is a finite-dimensional $\k$-vector space. Hence, we see that $E^3_{1,1}$ is an infinite-dimensional $\k$-vector space. Therefore, $\Tor_2^A(\k, A/J)$ is an infinite-dimensional $\k$-vector space, and so $J$ is not finitely presented as a left $A$-module. We conclude that $A$ is not graded left coherent.

\end{proof}

There are cases where the condition in statement (2) of Theorem \ref{new criterion on coherence} is automatically satisfied, as illustrated in our final main result. 

\begin{thm}
\label{useful theorem}
Let $A$ and $B$ be graded algebras such that $B$ is graded left coherent. Suppose that $A$ is a graded right-free extension of $B$ by the ideal $I$; identify $B$ with $A/I$. Furthermore, assume that there is a vector space decomposition $B = C + D$, where $C$ is a graded left Noetherian subalgebra of $B$, and $D$ is a graded left ideal of $B$ with a finite homogeneous generating set $\{{\overline z}_i = z_i+I:  1 \leq i \leq m\}$ such that $z_i I = 0$ for all $1 \leq i \leq m$. Then $A$ is graded left coherent.  
\end{thm}

\begin{proof}
Let $J$ be a finitely generated graded left ideal of $A$. Let $$M = (I \cap J)/(IJ).$$ By Theorem \ref{new criterion on coherence} (2) it suffices to prove that $M$ is finitely presented as a left $B$-module. Using Theorem \ref{new criterion on coherence} (1) let $$\{m_i \in M : 1 \leq i \leq n\}$$ be a finite homogeneous generating set for $_BM$. Let $d_i = \deg(m_i)$ for $1 \leq i \leq n$. Consider the canonical surjection of graded left $B$-modules $$\bigoplus_{i=1}^n B(-d_i) \to M, \quad 1_{B(-d_i)} \mapsto m_i;$$ and let $K = \ker(\oplus B(-d_i) \to M)$. We must prove that $K$ is a finitely generated left $B$-module.

The assumption that $z_i I = 0$ for all $1 \leq i \leq m$ ensures that ${\overline z}_i M = 0$, and so $DM = 0$. Let us consider $M$ as a left $C$-module by restricting the action of $B$ to its subalgebra $C$; denote this module by $_C M$. We claim that $_C M$ is finitely generated. To see this note that $$M= \sum_{i=1}^n B m_i = \sum_{i=1}^n(C+D)M = \sum_{i=1}^n Cm_i,$$ as desired.

Next, consider the following surjection of graded left $C$-modules $$\bigoplus_{i=1}^n C(-d_i) \to M, \quad 1_{C(-d_i)} \mapsto m_i;$$ and let $L = \ker(\oplus C(-d_i) \to M)$. Since $C$ is graded left Noetherian we know that $L$ is finitely generated as a left $C$-module. Let $\{{\overline g}_k : 1 \leq k \leq l\}$ be a set of homogeneous generators of $_CL$. Let us also define $${\overline z}_{ji} = (0, \ldots, {\overline z}_j, \ldots, 0) \in \bigoplus_{i=1}^n B(-d_i) \text{ for all } 1 \leq j \leq m, \, 1 \leq i \leq n,$$ where ${\overline z}_j$ is in the $i$th position. Let $$S = \{{\overline z}_{ji} : 1 \leq j \leq m, 1 \leq i \leq n\} \cup \{{\overline g}_k : 1 \leq k \leq l\}.$$ 

We claim that $K$ is generated as a left $B$-module by the set $S$. First, it is clear that $S \subset K$. Let $K'$ denote the $B$-submodule of $K$ generated by $S$. Let $(b_1, \ldots, b_n) \in K$. Write, for each $i$, $1 \leq i \leq n$, 
$$b_i = c_i + \sum_{j=1}^m b'_{ij} {\overline z}_j  \text{ for some } c_i \in C,\, b'_{ij} \in B.$$ Then we have 
$$0 = \sum_{i=1}^n b_i m_i = \sum_{i=1}^n (c_i + \sum_{j=1} ^m  b'_{ij} {\overline z}_j) m_i = \sum_{i=1}^n c_i m_i.$$ Therefore $(c_1, \ldots, c_n) \in L$ and we may write $$(c_1, \ldots, c_n) = \sum_{k=1}^l e_k {\overline g}_k \text{ for some } e_k \in C.$$ Now observe that 
\begin{align*}
(b_1 , \ldots, b_n) &= (c_1 + \sum_{j=1}^m b'_{1 j} {\overline z}_j, \ldots, c_n + \sum_{j=1}^m b'_{n j} {\overline z}_j) \\
&= (c_1, \ldots, c_n) + \sum_{j=1}^m b'_{1 j} {\overline z}_{j 1} + \cdots + \sum_{j=1}^m b'_{n j} {\overline z}_{j n} \\
&= \sum_{k=1}^l e_k {\overline g}_k + \sum_{j=1}^m b'_{1 j} {\overline z}_{j 1} + \cdots + \sum_{j=1}^m b'_{n j} {\overline z}_{j n}.
\end{align*}

Hence we see that $(b_1, \ldots, b_n) \in K'$, and so $K$ is finitely generated as a left $B$-module, as claimed.

We conclude that $M$ is a finitely presented left $B$-module, and so, by Theorem \ref{new criterion on coherence} (2), $A$ is graded left coherent.

\end{proof}

\section{Coherence of certain twisted tensor products}
\label{examples}

One of the motivations for this paper was to determine if certain twisted tensor products constructed in \cite{Conner-Goetz} are graded coherent. We briefly review the notion of a graded twisted tensor product. Let $A$ and $B$ be $\N$-graded $\k$-algebras. Let $\mu_A: A \tsr A \to A$ and $\mu_B: B \tsr B \to B$ denote the multiplication maps of $A$ and $B$, respectively. Endow the $\k$-linear tensor product $A \tsr B$ with an $\N$-grading via $$(A \tsr B)_m = \bigoplus_{k+l = m} A_k \tsr B_l.$$ Let $\t: B \tsr A \to A \tsr B$ be an $\N$-graded $\k$-linear map. We say that $\t$ is a \emph{twisting map} if $\t(1 \tsr a) = a \tsr 1$ and $\t(b \tsr 1) = 1 \tsr b$ for all $a \in A$ and $b \in B$, and $$\t(\mu_B \tsr \mu_A) = (\mu_A \tsr \mu_B)(1 \tsr \t \tsr 1)(\t \tsr \t)(1 \tsr \t \tsr 1).$$ Given a twisting map $\t: B \tsr A \to A \tsr B$, the space $A \tsr B$ carries the structure of a unital associative algebra with multiplication given by $\mu_{\t}: A \tsr B \tsr A \tsr B \to A \tsr B$, where $\mu_{\t} = (\mu_A \tsr \mu_B)(1 \tsr \t \tsr 1)$. We refer to $(A \tsr B, \mu_{\t})$ as \emph{the twisted tensor product of $A$ and $B$ associated to $\t$}. We also write $A \tsr_{\t} B$ for this algebra. For more details on graded twisted tensor products we refer the reader to \cite{Conner-Goetz1}, for example. 

Theorem \ref{useful theorem} enables us to prove the following result about certain graded twisted tensor products.

\begin{thm}
\label{zero product ttps}
Let $A$ be a quadratic twisted tensor product of $\k[x.y]$ and $\k[z]$ associated to a graded twisting map $\t: \k[z] \tsr \k[x,y] \to \k[x,y] \tsr \k[z]$. Suppose that $\t(z \tsr x) = 0$. Then $A$ is graded left coherent.
\end{thm}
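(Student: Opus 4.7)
The plan is to apply Theorem \ref{useful theorem} with the two-sided ideal $I = xA$. First I would verify that $I$ is two-sided: the hypothesis $\tau(z \tsr x) = 0$ gives $zx = 0$ in $A$, and the commutativity of $\k[x,y]$ gives $yx = xy$, so $Ax \subseteq xA$. Next I would verify that $I$ is free as a right $A$-module: the twisted tensor product structure makes $A$ free as a left $\k[x,y]$-module (with basis the image of $\k[z]$), and since $x$ is a non-zero-divisor in $\k[x,y]$, left multiplication by $x$ is injective on $A$. Hence the right $A$-module map $A(-1) \to xA$, $a \mapsto xa$, is an isomorphism, so $I$ is free of rank one.

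Next I would identify $B = A/I$ explicitly. Writing $\tau(z \tsr y) = \alpha x^2 + \beta xy + \gamma y^2 + \delta xz + \epsilon yz + \zeta z^2$ for some scalars and reducing modulo $xA$, the only surviving quadratic relation is $\bar z \bar y = \gamma \bar y^2 + \epsilon \bar y \bar z + \zeta \bar z^2$. From the PBW basis $\{x^a y^b z^c\}$ of $A$ one reads off that $B$ has $\k$-basis $\{\bar y^b \bar z^c : b, c \geq 0\}$; in particular, $B$ is itself a quadratic twisted tensor product of $\k[\bar y]$ and $\k[\bar z]$ with Hilbert series $1/(1-t)^2$.

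For the decomposition required by Theorem \ref{useful theorem}, I would take $C = \k[\bar y]$, the (Noetherian) polynomial subalgebra of $B$ generated by $\bar y$, and $D = B \bar z$, the graded left ideal generated by $\bar z$. The basis of $B$ immediately yields $B = C \oplus D$ as graded vector spaces, $D$ is generated by the single homogeneous element $\bar z$, and the lift $z \in A$ of $\bar z$ satisfies $z \cdot I = (zx) A = 0$ by hypothesis.

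The main obstacle will be confirming the remaining hypothesis, that $B$ itself is graded left coherent. Since $B$ is a quadratic twisted tensor product of two polynomial rings in one variable with polynomial-ring Hilbert series, it is quite constrained: up to isomorphism the possibilities are essentially the quantum planes, the Jordan plane, and a few degenerate cases such as the monomial algebra $\k\la y, z\ra/(zy)$. In each case graded coherence of $B$ is known, either from Noetherianity in the non-degenerate cases (cf.\ Theorem \ref{Piontkovskii coherence}) or from classical results on connected monomial algebras in the remaining ones. With $B$ graded left coherent, Theorem \ref{useful theorem} applies and yields the graded left coherence of $A$.
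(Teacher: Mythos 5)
Your proposal is correct and follows essentially the same route as the paper: apply Theorem \ref{useful theorem} to $I = xA$, identify $B = A/I$ as a one-relator quotient of $\k\la y, z\ra$ with basis $\{\bar{y}^b\bar{z}^c\}$, and take $C = \k[\bar{y}]$, $D = B\bar{z}$. The only real divergence is the step where $B$ must be shown graded left coherent: the paper invokes Piontkovskii's theorem that connected graded algebras with a single defining relation are coherent (\cite{Piontkovski}, Theorem 1.2), which disposes of all cases at once, whereas your classification sketch would still need to be completed (including the degenerate, non-monomial forms of the relation $\bar{z}\bar{y} - \gamma\bar{y}^2 - \epsilon\bar{y}\bar{z} - \zeta\bar{z}^2$, and over a field that need not be algebraically closed) before it carries the same weight.
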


\begin{proof}
We check that the criteria of Theorem \ref{useful theorem} are satisfied. Let $I = xA$ be the right ideal of $A$ generated by $x$. The assumption that $\t(z \tsr x) = 0$ makes it easy to see that $I$ is a two-sided ideal of $A$. Moreover, the fact that $A$ is a twisted tensor product of the form $\k[x,y] \tsr_{\t} \k[z]$ ensures that $I$ is a free right $A$-module. Let $B = A/I$. Then we have proved that $A$ is a graded right-free extension of $B$ by the ideal $I$. 

Next, we note that it is easy to prove that $$B \cong \k \la y, z \ra/\la zy - \a y^2 - \b yz - \g z^2 \ra,$$ for some $\a, \b, \g \in \k$. Then, using \cite{Piontkovski} Theorem 1.2, we know that $B$ is graded left coherent. Moreover, one easily checks that the conditions of \cite{Conner-Goetz1} Theorem 3.7 are satisfied for the algebra $B$. Therefore $$B \cong \k[y] \tsr_{\s} \k[z],$$ where $\s: \k[z] \tsr \k[y] \to \k[y] \tsr \k[z]$ is the graded twisting map determined by $\s(z\tsr y) = \a y^2 \tsr 1+\b y \tsr z + 1 \tsr \g z^2$. 

Let $C$ be the subalgebra of $B$ generated by the element $y+I$. Let $D$ denote the left ideal of $B$ generated by the element $z + I$. Then the fact that $B \cong \k[y] \tsr_{\s} \k[z]$ makes it clear that there is a $\k$-vector space decomposition $$B = C \oplus D.$$ Additionally, it is clear that $C$ is isomorphic to the polynomial algebra $\k[y]$. In particular, $C$ is graded left Noetherian. 

We conclude, by Theorem \ref{useful theorem}, that $A$ is graded left coherent. 
\end{proof}

\begin{ex} Let $A = \k \la x, y, z \ra/ \la xy-yx, zx, zy \ra.$ Then $A$ is a quadratic twisted tensor product of $\k[x,y]$ and $\k[z]$. By Theorem \ref{zero product ttps}, $A$ is graded left coherent. We remark that the quotient algebra $B = A/I$, where $I = xA$, is not graded left Noetherian. So, it is not clear if one can apply Piontkovskii's result: Theorem \ref{Piontkovskii coherence}, to prove the graded left coherence of $A$.

\end{ex}

We conclude this paper with some questions. 

\begin{ques}
In \cite{Piontkovski1} Piontkovski has proved that monomial algebras are graded coherent. Can one prove this result using the criterion in Theorem \ref{new criterion on coherence}?
\end{ques}

It is well known that the analogue of the Hilbert basis theorem is false for coherent rings. Namely, Soublin \cite{Soublin} has constructed a coherent ring $R$ such that the polynomial ring $R[x]$ is not coherent. In contrast, recently Minamoto \cite{Minamoto} has proved that if $B$ is a coherent algebra, then the polynomial algebra $B[x]$ is graded coherent, where the grading on $B[x]$ is given by placing $B$ in degree $0$ and $\deg(x) = 1$. An analogous result in the context of connected $\N$-graded $\k$-algebras would be interesting.

\begin{ques}
\label{polynomial coherence}
Let $B$ be a graded left coherent algebra. Consider the polynomial ring $B[x]$ as a connected $\N$-graded $\k$-algebra with $\deg(x) = 1$. Must $B[x]$ be graded left coherent?
\end{ques}

One might attempt to answer this question by applying the criterion of Theorem \ref{new criterion on coherence} to the two-sided ideal of $B[x]$ generated by $x$. We remark in passing that we have checked that the analogue of Question \ref{polynomial coherence} has a positive answer in the case of the truncated polynomial ring $B[x]/\la x^n \ra$.

%

\bibliographystyle{plain}
\bibliography{bibliog}

\end{document}